\documentclass[10pt]{amsart}
\usepackage{setspace,xypic,somedefs,tracefnt,latexsym,rawfonts,graphicx,amsfonts,amssymb,hyphenat,latexsym,enumerate,amscd}
\makeatletter
\@namedef{subjclassname@2020}{%
  \textup{2020} Mathematics Subject Classification}
\makeatother
\def\cal{\mathcal}
\def\Bbb{\mathbb}

\def\wt{\widetilde}

\newtheorem{thm}{Theorem}[section]
\newtheorem{prop}[thm]{Proposition}
\newtheorem{exm}[thm]{Example}
\newtheorem{lemma}[thm]{Lemma}
\newtheorem{cor}[thm]{Corollary}
\newtheorem{defn}[thm]{Definition}
\newtheorem{rem}[thm]{Remark}

\numberwithin{equation}{section}
\makeatletter
\newcommand{\colim@}[2]{%
  \vtop{\m@th\ialign{##\cr
    \hfil$#1\operator@font colim$\hfil\cr
    \noalign{\nointerlineskip\kern1.5\ex@}#2\cr
    \noalign{\nointerlineskip\kern-\ex@}\cr}}%
}
\newcommand{\colim}{%
  \mathop{\mathpalette\colim@{\rightarrowfill@\textstyle}}\nmlimits@
}
\makeatother
\begin{document}
\date{\today}
\title[On aspherical configuration Lie groupoids]{On aspherical configuration
  Lie groupoids}
\author[S.K. Roushon]{S.K. Roushon}
\address{School of Mathematics\\
Tata Institute\\
Homi Bhabha Road\\
Mumbai 400005, India}
\email{roushon@math.tifr.res.in} 
\urladdr{https://mathweb.tifr.res.in/\~\!\!\! roushon/}
\begin{abstract} The complement of the hyperplanes
  $\{x_i=x_j\}$, for all $i\neq j$ in $M^n$, for $M$ an aspherical $2$-manifold,
  is known to be aspherical. Here we consider the situation, when
  $M$ is
  a $2$-dimensional orbifold. We prove this complement to be aspherical for a
  class of aspherical $2$-dimensional orbifolds, and predict that
  it should be true in general also.
  We generalize this question in the
  category of Lie groupoids, as orbifolds can be identified with a certain
  kind of Lie groupoids.
  \end{abstract}
 
\keywords{Configuration Lie groupoid, orbifold, orbifold fundamental group.}

\subjclass[2020]{Primary: 22A22, 55P20, 55R80; Secondary: 57R18.}
\maketitle
\tableofcontents

\section{Introduction}
Let $X$ be a topological space and 
$PB_n(X)$ be the configuration space of ordered
$n$-tuples of pairwise distinct points of $X$. That is,
$$PB_n(X)=\{(x_1,x_2,\dots, x_n)\in X^n\ |\ x_i\neq x_j,\ \text{for}\ 
i\neq j\}.$$

Let $M$ be a connected manifold of dimension $\geq 2$, and $n\geq 2$.
The Fadell-Neuwirth fibration theorem ([\cite{FN62}, Theorem 3]) says that the projection map $M^n\to M^r$
  to the first $r$ coordinates restricts to the following locally trivial fiber bundle map,
  with fiber homeomorphic to $PB_{n-r}(\wt M)$, where $\wt M=M-\{r\
\text{points}\}$. 
$$f(M):PB_n(M)\to PB_r(M).$$

Our main motivation for this article is the following corollary of the
fibration theorem. In the rest of the article we assume $r=n-1$.

\begin{thm}([\cite{FN62}, Corollary 2.2])\label{FN} Let $M$ be a connected aspherical $2$-manifold. Then
  $PB_n(M)$ is also aspherical.\end{thm}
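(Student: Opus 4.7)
The plan is to prove Theorem \ref{FN} by induction on $n$, using the Fadell--Neuwirth fibration stated just above it as the engine. The base case $n=1$ is immediate since $PB_1(M)=M$, which is aspherical by hypothesis.

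For the inductive step, I would apply the fibration theorem with $r=n-1$, giving a locally trivial fiber bundle
$$f(M):PB_n(M)\longrightarrow PB_{n-1}(M),$$
whose fiber is homeomorphic to $PB_1(\wt M)=\wt M=M-\{n-1\ \text{points}\}$. By the inductive hypothesis the base $PB_{n-1}(M)$ is aspherical, so the goal reduces to showing that the fiber $\wt M$ is aspherical and then invoking the long exact sequence of homotopy groups of the fibration. Indeed, if $\pi_k(\wt M)=0$ and $\pi_k(PB_{n-1}(M))=0$ for $k\geq 2$, then the sequence
$$\cdots\to\pi_k(\wt M)\to\pi_k(PB_n(M))\to\pi_k(PB_{n-1}(M))\to\cdots$$
forces $\pi_k(PB_n(M))=0$ for $k\geq 2$, giving asphericity.

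The one substantive point is therefore asphericity of $\wt M$. Since $n\geq 2$, at least one point is removed, so $\wt M$ is a non-compact connected surface. Any such surface deformation retracts onto a $1$-dimensional CW complex (a graph), which is a $K(\pi,1)$ because its universal cover is a tree, hence contractible. Consequently $\pi_k(\wt M)=0$ for all $k\geq 2$, completing the inductive step.

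The argument has no real obstacle; the only subtlety is ensuring that $\wt M$ is genuinely a non-compact surface so that the retraction to a graph applies, which is automatic from $n-1\geq 1$ and the fact that removing a point from a $2$-manifold yields a non-compact $2$-manifold. The same outline is the blueprint for the orbifold/Lie-groupoid generalization the paper aims to prove, so it is worth phrasing the proof in a way that isolates the two inputs: (i) a Fadell--Neuwirth-type fibration, and (ii) asphericity of the punctured fiber.
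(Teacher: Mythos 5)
Your proposal is correct and follows essentially the same route as the paper: induct on $n$, use the Fadell--Neuwirth fibration $PB_n(M)\to PB_{n-1}(M)$ with fiber $M-\{(n-1)\ \text{points}\}$, and run the long exact sequence of homotopy groups. The only difference is that you spell out why the punctured fiber is aspherical (a non-compact surface retracts onto a graph), a point the paper leaves implicit.
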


\begin{proof} Note that, the fiber of $f(M)$ is aspherical, since
  it is homeomorphic to $M-\{(n-1)\
  \text{points}\}$. Next, since $f(M)$ is a fibration, it induces a long exact sequence of
  homotopy groups. Therefore, by induction on $n$, we conclude that $PB_n(M)$ is aspherical.\end{proof}

It is an important subject to study the
homotopy groups, especially the fundamental groups of the
configuration spaces of a manifold. Since in dimension $\geq 3$, the space
$PB_n(M)$ and the product manifold $M^n$ have isomorphic fundamental groups,
the dimension $2$ case is of much interest. Using the fibration $f(M)$,
there are results to compute the higher homotopy groups of the
configuration space as well. See \cite{FN62} for more on this subject.

Orbifolds are also of fundamental
importance in algebraic and differential geometry, topology and
string theory. In \cite{Rou20} 
we studied the possibility of extending the Fadell-Neuwirth fibration
theorem for orbifolds. However,
to define a fibration between orbifolds, we had to consider the category of
Lie groupoids. Since an orbifold can be realized as a Lie groupoid (\cite{Moe02}),
and there are enough tools in this category to define a fibration.
There, we defined two notions ($a$ and $b$-types)
of a fibration ([\cite{Rou20}, Definition 2.4]) and the corresponding ($a$ and $b$-types)
configuration Lie groupoids of a
Lie groupoid to enable us to state a Fadell-Neuwirth type fibration theorem.
For an orbifold $M$, the $b$-type configuration Lie groupoid is the correct
model to induce the orbifold structure on $PB_n(M)$. We proved that the
Fadell-Neuwirth fibration theorem extends in this generality, under some strong hypothesis
($c$-groupoid). We will show in Proposition \ref{nonfibration},
that this is the best possible extension. For
this, we will deduce that
the map $f(M)$ is not a $a$(or $b$)-type fibration for the $a$(or
$b$)-type configuration
Lie groupoids of Lie groupoids, corresponding to global quotient
orbifolds of dimension $\geq 2$ with a homological condition, and
a non-empty singular set. In fact, for $2$-dimensional orbifolds 
with at least one cone point, $f(M)$ does not induce a long exact
sequence of orbifold homotopy groups. See Remark \ref{les} for a more
precise statement.

Recall that, for a connected
aspherical $2$-manifold $M$, by Theorem \ref{FN}, $PB_n(M)$ is an aspherical
manifold. Equivalently, the universal cover of $PB_n(M)$ is a
contractible manifold.

We define
an orbifold $M$ to be {\it aspherical}, if
its universal orbifold cover $\wt M$ is a contractible orbifold, that is, if the
orbifold homotopy groups $\pi_q^{orb}(\wt M)$, for $q\geq 1$, are trivial.

At this point, one may ask if Theorem \ref{FN} is true for connected $2$-dimensional
orbifolds. In the following theorem we give an answer to this question,
for a class of aspherical $2$-dimensional
orbifolds. Let ${\Bbb C}^*$ be ${\Bbb C}$ with one puncture.

\begin{thm}\label{mt} Let $M$ be one of the following $2$-dimensional
  orbifolds.

  \begin{itemize}

    \item Underlying space is $\Bbb C$ with one cone point of order $m\geq 2$.
    
    \item Underlying space is $\Bbb C$ with two cone points of order
      $2$.
    \item Underlying space is ${\Bbb C}^*$ with one cone point of order $2$.
    \end{itemize}

    Then $PB_n(M)$ is aspherical.\end{thm}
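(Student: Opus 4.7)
The plan is to realise each orbifold $M$ as a global quotient $[N/G]$ by a finite group, reduce the question to asphericity of an associated orbit configuration space in $N^n$, and then invoke complex reflection arrangement theory (Case 1) or an inductive Fadell--Neuwirth-type argument for the manifold cover (Cases 2 and 3).

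The first step is to present each $M$ as $[N/G]$ with $N$ an aspherical $2$-manifold and $G$ a finite group of biholomorphisms: for the first case take $N = \mathbb{C}$ and $G = \mathbb{Z}_m$ acting by multiplication by $\zeta = e^{2\pi i/m}$; for the second case take $N = \mathbb{C}^*$ and $G = \mathbb{Z}_2 = \langle w \mapsto 1/w\rangle$, with the orbifold quotient $w \mapsto (w + w^{-1})/2$; for the third case take $N = \mathbb{C}\setminus\{\pm 1\}$ (a thrice-punctured sphere) and $G = \mathbb{Z}_2 = \langle w \mapsto -w \rangle$, with quotient $w \mapsto w^2$. The second step is to identify the $b$-type configuration Lie groupoid as
\[
PB_n(M) = \bigl[ A_n/G^n \bigr], \qquad A_n = \{(x_1,\dots,x_n) \in N^n : Gx_i \cap Gx_j = \varnothing \text{ for } i \neq j\},
\]
with $G^n$ acting coordinatewise. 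Because $G^n$ is finite, the orbifold universal cover of $[A_n/G^n]$ is the (manifold) universal cover of $A_n$, so $PB_n(M)$ is orbifold-aspherical if and only if $A_n$ is aspherical as a manifold. This reduces the theorem to showing asphericity of $A_n$.

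For the first case, $A_n = \mathbb{C}^n \setminus \bigcup_{i<j,\,0\le k<m}\{z_i = \zeta^k z_j\}$ is precisely the reflection arrangement complement of the complex reflection group $G(m,m,n)$, hence aspherical by Bessis's $K(\pi,1)$ theorem for complex reflection arrangements -- or Deligne's theorem on the type-$D_n$ Coxeter arrangement when $m = 2$. For the second and third cases I would induct on $n$ using the forgetful map $p\colon A_n \to A_{n-1}$ that drops the last coordinate. Over the open subset $U \subset A_{n-1}$ of tuples no coordinate of which is a $G$-fixed point in $N$ (the fixed-point set is $\{\pm 1\}$ in Case 2 and the single point $0$ in Case 3), $p|_{p^{-1}(U)}$ is a locally trivial fibre bundle whose fibre is an aspherical punctured surface, so inductive asphericity of $U$ and Theorem \ref{FN}-style reasoning give asphericity of $p^{-1}(U)$. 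Equivalently, the open subspace $A_n^{\circ} \subset A_n$ obtained by deleting all fixed-point strata is an iterated Fadell--Neuwirth tower of free-action orbit configuration spaces, aspherical in the spirit of Xicot\'encatl's theorem.

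The main obstacle -- and the reason Proposition \ref{nonfibration} rules out a naive orbifold-level fibration argument -- is the re-inclusion of the fixed-point strata: the fibre of $p$ drops a puncture as a coordinate approaches a fixed point of $G$, so $p$ is not a global fibration. The hardest step will therefore be showing that the codimension-one inclusion $A_n^{\circ} \hookrightarrow A_n$ induces isomorphisms on $\pi_q$ for $q \ge 2$. I expect to handle this either by a Mayer--Vietoris or stratification argument comparing $A_n$ to a gluing of $p^{-1}(U)$ with preimages of the lower-dimensional strata, or by finding an explicit algebraic model. The low-$n$ picture is encouraging: in Case 2 with $n = 2$, the map $(w_1, w_2) \mapsto (w_1 w_2,\, w_1/w_2)$ realises $A_2$ as a free $\mathbb{Z}_2$-cover of $(\mathbb{C}^*\setminus\{1\})^2 = K(F_2\times F_2, 1)$, so $A_2$ is aspherical at once. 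Generalising such a presentation to arbitrary $n$, or extending Xicot\'encatl's free-action orbit configuration fibration to involutions with isolated fixed points, is what I anticipate to be the crucial technical ingredient.
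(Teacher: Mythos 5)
Your reduction is the same as the paper's: present $M$ as a global quotient $\wt M/H$ by a finite (or discrete) group and use the fact that the orbifold covering $PB_n(\wt M,H)\to PB_n(M)$ identifies higher homotopy groups (this is Proposition \ref{induction} together with Corollary \ref{orbcovcor}), so everything comes down to asphericity of the orbit configuration space $A_n$. Your Case 1 is essentially the paper's argument: $A_n$ is the $G(m,m,n)$ reflection arrangement complement, and invoking Bessis or Deligne is a legitimate substitute for the paper's route, which instead uses Nakamura's explicit fibration $z_j\mapsto z_n^m-z_j^m$ over $PB_{n-1}({\Bbb C}^*)$ and induction. Your models for Cases 2 and 3 also reproduce exactly the spaces the paper works with (your $A_n$ in Case 3 is literally the paper's $W$).

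The gap is that in Cases 2 and 3 you never actually prove $A_n$ is aspherical, and the route you sketch is precisely the one that cannot work. The strata you delete to form $A_n^{\circ}$ are complex hypersurfaces ($w_i=\pm 1$, resp.\ $w_i=0$), so the inclusion $A_n^{\circ}\hookrightarrow A_n$ already changes $\pi_1$ by killing meridians, and there is no general principle forcing it to induce isomorphisms on $\pi_q$ for $q\geq 2$: asphericity of arrangement complements is notoriously not monotone under adding or deleting hyperplanes, which is why the $K(\pi,1)$ problem is hard in the first place. Proposition \ref{nonfibration} in the paper shows the forgetful map fails even to be a quasifibration once fixed points are present, so no Fadell--Neuwirth tower, Mayer--Vietoris patching, or Xicot\'encatl-type argument on the free locus will recover $A_n$ itself; your $n=2$ computation is correct but does not generalize. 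The paper's actual resolution is to recognize $A_n$ (after passing to a further finite or infinite cyclic cover, or splitting off a ${\Bbb C}^*$ factor) as the complement of the affine reflection arrangement of type $\wt D_n$ (Case 2) resp.\ $\wt B_n$ (Case 3), and then to import the recently established $K(\pi,1)$ property for affine Artin groups: Paolini--Salvetti for $\wt D_n$, and Callegaro--Moroni--Salvetti (showing the relevant complement is simplicial in Deligne's sense) plus Deligne for $\wt B_n$. These deep external inputs are the real content of Cases 2 and 3, and your outline does not supply a replacement for them.
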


  Here, we make some remark on the reasons behind the three cases in the theorem.
  The first one relates to the solution of the $K(\pi, 1)$-problem of the hyperplane arrangement
  complement corresponding to the unitary reflection groups $G(m,l,n)$ (\cite{TN83}), and the finite type
  Artin groups of type $D_n$ ($m=2$ case) (\cite{Bri73}). The second
  and third cases need the solutions of the $K(\pi, 1)$-problem for the affine Artin groups of types
  $\wt D_n$ and $\wt B_n$, respectively (\cite{PS20}, \cite{CMS10}). The $K(\pi, 1)$-problem for
  hyperplane arrangement complements is well known, and there are milestone results in the literature. See
  \cite{Del} for finite type Artin groups, \cite{PS20} for affine Artin groups, and \cite{DB15}
  for finite complex reflection groups.
  
  We predict the following.

  \medskip
  \noindent
  {\bf Asphericity conjecture.} {\it $PB_n(M)$ is aspherical, for a connected aspherical
    $2$-dimensional orbifold $M$.}

  \medskip
  Since a contractible orbifold is a manifold (\cite{Lyt13}), 
  the universal orbifold cover $\wt M$ of a connected aspherical
  $2$-dimensional orbifold, is an aspherical simply
  connected $2$-manifold. Therefore, $\wt M$ is diffeomorphic to a submanifold of 
  ${\Bbb R}^2$, and $\pi_1^{orb}(M)$ is acting effectively and properly discontinuously
  on $\wt M$. We will prove in Proposition \ref{induction}, that the Asphericity
  conjecture is equivalent to the following statement.

  \medskip
  \noindent
  {\bf Asphericity conjecture for orbit configuration spaces.} {\it Let $H$ be a discrete group
    acting effectively and properly discontinuously on a connected and
    simply connected submanifold $\wt M$ of 
    ${\Bbb R}^2$. Then, the following orbit configuration space
    of the action of $H$ on $\wt M$, is aspherical.
$$PB_n(\wt M, H):=\{(x_1,x_2,\dots, x_n)\in M^n\ |\ Hx_i\neq Hx_j,\ \text{for}\ 
i\neq j\}.$$}

To define the orbifold homotopy groups of an orbifold, one needs to
look at orbifolds as Lie groupoids. In the next section we will see how to study orbifolds in the
    category of Lie groupoids. Then, we will formulate the above
    statements in this
    general setting. Also, we
    will justify the equivalence of the Asphericity conjecture and its orbit
    configuration space version. In the final section we give the
    proof of Theorem \ref{mt}.

\section{Lie groupoids and orbifolds}\label{olgqf}
In this paper by a `manifold' we mean a `Hausdorff smooth manifold' and by a `group' we mean
a `discrete group', unless mentioned otherwise. A `map' is either continuous or smooth,
which will be clear from the context. And a `fibration' would mean 
a `Hurewicz fibration'.

We now recall some basics on Lie groupoids and orbifolds. See  
\cite{ALR07}, \cite{Moe02} or \cite{MP97} for more details.

\subsection{Lie groupoids}\label{lg}
Let $\cal G$ be a {\it Lie groupoid}  with object space ${\cal G}_0$ and morphism
space ${\cal G}_1$. Let $s,t:{\cal G}_1\to {\cal G}_0$ be the source
and the target maps defined by $s(\sigma)=x$ and $t(\sigma)=y$, for
$\sigma\in mor_{\cal G}(x,y)\subset {\cal G}_1$. Recall that
$s$ and $t$ are smooth and submersions.

A {\it homomorphism} $f$ between two Lie groupoids is a smooth functor which respects all the
structure maps. $f_0$ and $f_1$ denote the object and morphism level maps of $f$, 
respectively.  
For any $x\in {\cal G}_0$, the set $t(s^{-1}(x))$ is called the {\it orbit} of $x$. The space
$|{\cal G}|$ of all orbits with respect to the quotient topology is called the
{\it orbit space} of the Lie groupoid. If $f:{\cal G}\to {\cal H}$ is a homomorphism
between two Lie groupoids, then $f$
induces a map $|f|:|{\cal G}|\to |{\cal H}|$, making the following diagram commutative.
We define $\cal G$ to be {\it Hausdorff} if $|{\cal G}|$ is Hausdorff,
and it is called a $c-groupoid$ if the quotient map ${\cal G}_0\to |{\cal G}|$ is a covering map.
Hence a $c$-groupoid is Hausdorff.

\centerline{
  \xymatrix{{\cal G}_0\ar[r]^f\ar[d]&{\cal H}_0\ar[d]\\
    |{\cal G}|\ar[r]^{|f|}&|{\cal H}|.}}

Given a Hausdorff Lie groupoid $\cal G$, we defined in [\cite{Rou20},
Definition 2.8] the {\it b-configuration Lie groupoid}
$PB_n^b({\cal G})$. In this paper we do not use the superscript $b$ as we consider only this
configuration Lie-groupoid. Recall that, its object space $PB_n({\cal G})_0$ is the $n$-tuple of objects
of $\cal G$ with mutually distinct orbits.
$$PB_n({\cal G})_0=\{(x_1,x_2,\ldots , x_n)\in {\cal G}_0^n\ |\ t(s^{-1}(x_i))\neq t(s^{-1}(x_j)),\ \text{for}\ i\neq j\}.$$
The morphism space $PB_n({\cal G})_1$ is $(s^n, t^n)^{-1}(PB_n({\cal G})_0\times PB_n({\cal G})_0)$.
We also showed in [\cite{Rou20}, Lemma 2.9] that the projection to the first $n-1$ coordinates
on both the object and morphism spaces define a
homomorphism $f({\cal G}):PB_n({\cal G})\to PB_{n-1}({\cal G})$.

Let $\cal H$ and $\cal G$ be two Lie
    groupoids and $f:{\cal H}\to {\cal G}$ be a homomorphism, such
    that $f_0:{\cal H}_0\to {\cal G}_0$ is a covering map. Then, $f$ is called a {\it
      covering homomorphism} of Lie groupoids if  ${\cal H}_0$ is a
    left $\cal G$-space with $f_0$ equal to the action map,
    ${\cal H}_1={\cal G}_1\times_{{\cal G}_0}{\cal H}_0$ and $f_1$ is the first projection. The source and the target
  maps of $\cal H$ coincide with the second projection and the
  action map, respectively. 

Next we recall the important concept of the classifying space of a Lie groupoid, which
is required to define algebraic invariants of the Lie groupoid. For a
Lie groupoid $\cal G$, the classifying space $B{\cal G}$ is
defined as the geometric realization of the simplicial manifold
${\cal G}_{\bullet}$ defined by the following iterated fibered products.
$${\cal G}_k={\cal G}_1\times_{{\cal G}_0}{\cal G}_1\times_{{\cal G}_0}\dots \times_{{\cal G}_0}{\cal G}_1.$$

See [\cite{ALR07}, p. 25] or \cite{Moe02} for some
  discussion on this matter, in the context of orbifold Lie groupoids (Example \ref{tg}).

\begin{defn}\label{HG}{\rm The $k$-th {\it homotopy group} of $\cal G$ is defined as the
$k$-th ordinary homotopy group of
$B\cal G$. That is, $\pi_k({\cal G}, \wt x_0):=\pi_k(B{\cal G}, \wt x_0)$ for $\wt x_0\in {\cal G}_0$.
A Lie groupoid $\cal G$ is called {\it aspherical} if $\pi_k({\cal G}, \wt x_0)=0$ for
all $k\geq 2$ and for all $\wt x_0\in {\cal G}_0$.}\end{defn}

Note that, a homomorphism $f:{\cal G}\to {\cal H}$ induces a map $Bf:B{\cal G}\to B{\cal H}$.
Also see \cite{ALR07} or \cite{Moe02} for some more on homotopy groups of Lie groupoids.

Now, we recall the concept of
equivalence between two Lie groupoids. One consequence
of this concept is that an equivalence $f:{\cal G}\to {\cal H}$ between two 
Lie groupoids induces a weak homotopy equivalence $Bf:B{\cal G}\to B{\cal H}$.

A more appropriate notion of equivalence between Lie groupoids
is Morita equivalence.

\begin{defn}\label{moritadef}{\rm Let $f:{\cal G}\to {\cal H}$ be a homomorphism between Lie
    groupoids. $f$ is called an {\it equivalence} if the following conditions are satisfied.

    \begin{itemize}
    \item The following composition is a surjective submersion.

      \centerline{
        \xymatrix{{\cal H}_1\times_{{\cal H}_0}{\cal G}_0\ar[r]^{\ \ \ \ \pi_1} &{\cal H}_1\ar[r]^t &{\cal H}_0.}}
      \noindent
      Here ${\cal H}_1\times_{{\cal H}_0}{\cal G}_0$ is the fibered product,
      defined by $s$ and $f_0$.

      \centerline{
        \xymatrix{{\cal H}_1\times_{{\cal G}_0}{\cal G}_0\ar[r]^{\ \ \ \pi_1}\ar[d]^{\pi_2}&{\cal H}_1\ar[d]^s\\
          {\cal G}_0\ar[r]^{f_0}&{\cal H}_0.}}

    \item The following commutative diagram is a fibered product of manifolds.

      \centerline{
        \xymatrix@C+1pc{{\cal G}_1\ar[r]^{f_1}\ar[d]^{(s,t)}& {\cal H}_1\ar[d]^{(s,t)}\\
          {\cal G}_0\times {\cal G}_0\ar[r]^{\!\!\!{f_0\times f_0}}&{\cal H}_0\times {\cal H}_0.}}
    \end{itemize}

  ${\cal G}$ and ${\cal H}$ are called {\it Morita equivalent} if
there is a third Lie groupoid $\cal K$ and two equivalences as follows.

\centerline{
  \xymatrix {{\cal G}&\ar[l]{\cal K}\ar[r]&{\cal H}.}}}\end{defn}

Next, we recall a standard example of a Lie groupoid which is relevant for us.

\begin{exm}\label{tg}{\rm Let $\wt M$ be a manifold, and a Lie group $H$ is  
    acting on $\wt M$ smoothly. Out of this information one constructs a Lie groupoid
    ${\cal G}(\wt M,H)$ as follows, and call it 
    the {\it translation Lie groupoid}. Define
    ${\cal G}(\wt M,H)_0=\wt M$, ${\cal G}(\wt M,H)_1=\wt M\times H$,
    $s(x,h)=x$, $t(x,h)=h(x)$, $u(x)=(x,1)$, $i(x,h)=(x, h^{-1})$ and $(h(x), h')\circ (x,h)=(x, h'h)$,
    for $h,h'\in H$ and $x\in \wt M$.
    When $H$ is the trivial group then ${\cal G}(\wt M, H)$ is called
    the {\it unit groupoid}, denoted by ${\cal G}(\wt M)$ and is identified with $\wt M$.
    In this paper we always consider $H$ to be discrete (unless explicitly mentioned)
    and is acting effectively and properly discontinuously on $\wt M$ ([\cite{Thu91}, Definition 3.7.1]).
  Then ${\cal G}(\wt M,H)$ is an example of an {\it (effective) orbifold Lie groupoid}. 
  In this case ${\cal G}(\wt M,H)$ is also called an orbifold Lie
  groupoid inducing the orbifold structure on 
  $M=\wt M/H$. 
  For the more general definition of  {\it orbifold Lie groupoid} see
  \cite{Moe02} or  [\cite{ALR07}, Definition 1.38].}\end{exm}

\begin{defn}\label{tolg} {\rm We call an effective orbifold Lie groupoid of
     type 
  ${\cal G}(\wt M, H)$ as in Example \ref{tg}, a {\it translation orbifold Lie
  groupoid}.}\end{defn}

\begin{exm}\label{quotient}{\rm Let $H$ and $\wt M$ be as in Example \ref{tg}.
Let $H'$ be a subgroup of $H$ and $i:H'\to H$ be the inclusion map. Then the maps
    $f_0:=id:\wt M\to \wt M$ and $f_1:=(id, i):\wt M\times H'\to \wt M\times H$ together
    define a homomorphism $f:{\cal G}(\wt M, H')\to {\cal G}(\wt M, H)$.}\end{exm}

Frequently, in this paper we will be using the following lemma.

\begin{lemma}\label{covering} Let $\cal G$ and $\cal H$  
  be two orbifold Lie groupoids, and $f:{\cal G}\to {\cal H}$ be a
  covering homomorphism. Then $f$ induces isomorphisms on higher homotopy groups
and an injection on the fundamental groups.\end{lemma}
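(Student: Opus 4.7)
\medskip

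\textbf{Proof plan.} I would upgrade the statement to the claim that the induced map $Bf : B{\cal G} \to B{\cal H}$ on classifying spaces is itself a covering map; the two conclusions of the lemma then follow from the standard covering-space facts that a covering induces isomorphisms on $\pi_k$ for $k \geq 2$ and an injection on $\pi_1$, together with Definition~\ref{HG}.

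First unpack the covering-homomorphism hypothesis in the direction $\cal G$ covers $\cal H$: $f_0:{\cal G}_0 \to {\cal H}_0$ is a covering, ${\cal G}_1 = {\cal H}_1 \times_{{\cal H}_0} {\cal G}_0$ with $f_1$ the first projection, and the source and target maps of $\cal G$ are respectively the second projection and the action of $\cal H$ on ${\cal G}_0$. A short induction then identifies the $k$-simplices of the nerve as the level-wise pullback
\[
{\cal G}_k \;\cong\; {\cal H}_k \times_{{\cal H}_0} {\cal G}_0,
\]
where the fiber product uses the ``source of the first morphism'' map ${\cal H}_k \to {\cal H}_0$ and $f_0$. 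Since pullbacks of covering maps are coverings, each component $f_k$ of the simplicial map $f_\bullet$ is a covering with the same constant discrete fiber $F := f_0^{-1}(\cdot)$, and all face and degeneracy maps commute with the resulting $F$-action.

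The key (and only nontrivial) step is to promote this level-wise covering of simplicial manifolds to an honest covering of their realizations. The plan is to cover $B{\cal H}$ by preimages of evenly covered neighbourhoods in ${\cal H}_0 \subset B{\cal H}$; the pullback compatibility of the nerve with face and degeneracy maps then forces cell attachments to lift coherently over such a chart, yielding a disjoint union of sheets indexed by $F$. This is most transparent in the translation-groupoid model of Example~\ref{quotient}: for $H' \leq H$, the Borel-construction description $B{\cal G}(\wt M, H) \simeq \wt M \times_H EH$ directly realizes $\wt M \times_{H'} EH \to \wt M \times_H EH$ as a covering with fiber $H/H'$, and the general orbifold-Lie-groupoid case reduces locally to finite-group quotients, where the same argument applies. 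This simplicial-realization step is the main obstacle; once it is secured, standard covering-space theory together with Definition~\ref{HG} completes the proof.
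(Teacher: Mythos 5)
Your proposal follows the same route as the paper: the paper's proof simply asserts that a covering homomorphism induces a covering map on classifying spaces and then invokes standard covering space theory (citing [ALR07, Proposition 2.17]). Your level-wise analysis of the nerve and the Borel-construction picture fill in the details the paper leaves as "easy to see," but the strategy is identical.
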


\begin{proof} It is easy to see that a covering homomorphism between two orbifold
  Lie groupoids induces a
  covering map on their classifying spaces. The Lemma now follows from
  standard covering space theory. See [\cite{ALR07}, Proposition 2.17].\end{proof}

    \subsection{Orbifolds as Lie groupoids}\label{oalg}
    An {\it orbifold} (\cite{Thu91}) or a {\it $V$-manifold} as in \cite{Sat56},
    is defined as follows.

\begin{defn}\label{orbifold-defn}{\rm Let $M$ be a paracompact
    Hausdorff topological space. Assume for each $x\in M$, there is a connected open
    neighborhood $U_x\subset M$ of $x$ satisfying the following conditions.

    $\bullet$ There is a connected open set $\wt U_x$ in some ${\Bbb R}^n$ and a finite
      group $G_x$ of diffeomorphisms of $\wt U_x$. Furthermore, there is a
      $G_x$-equivariant map $\phi_x:\wt U_x\to M$ such that the induced map 
      $[\phi_x]:\wt U_x/G_x\to M$ is a homeomorphism onto $U_x$.

      Then, $(\wt U_x, G_x, \phi_x)$ is called
      a {\it chart} and $M$ is called an {\it orbifold},
    with {\it underlying space} $M$. Given a
    chart $(U_x, G_x, \phi_x)$, the group $G_x$ can be
    shown to be unique, and is called the {\it local group} at $x$. If the local group
    at $x$ is trivial, then $x$ is called a {\it smooth} or a {\it regular} point, otherwise
    it is a {\it singular} point or a {\it singularity}.}\end{defn}

Assume dimension of $M$ is $2$. 
    Let $(U_x, G_x, \phi_x)$ be a chart such that, $G_x$ is  
    finite cyclic of order $q$, acting by rotation around the origin
    $(0,0)\in \wt U_x\subset {\Bbb R}^2$ by an angle $\frac{2\pi}{q}$
    and $\phi_x((0,0))=x$. Then,
    $x$ is called a {\it cone point} of {\it order} $q$. Also, there are
    two other types of singularities, called {\it reflector lines} and {\it corner
      reflectors}. In this dimension, it is known
    that the underlying space is homeomorphic to a $2$-dimensional manifold.
    The genus of the underlying space $M$ is called
    the {\it genus} of the orbifold $M$. See
    \cite{Sco83} for more details.

    \begin{exm}\label{nice}{\rm An orbifold Lie groupoid $\cal G$ gives
        an orbifold 
      structure on $|{\cal G}|$.
      See [\cite{ALR07}, Proposition 1.44].}\end{exm}

We now have the following useful lemma.

\begin{lemma}\label{covering-lemma} Let ${\cal G}$ and $\cal H$ be two orbifold
  Lie groupoids and $f:{\cal G}\to {\cal H}$ be a homomorphism.
  Then, $f$ is a covering homomorphism of Lie groupoids if and only
  if $|f|:|{\cal G}|\to |{\cal H}|$
is an orbifold covering map.\end{lemma}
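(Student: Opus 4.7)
The plan is to go back and forth between two local descriptions: the slice charts that realize the orbifold structure on $|{\cal G}|$ and $|{\cal H}|$ (as used in Example \ref{nice}), and the covering homomorphism data. For any $x\in {\cal G}_0$, its isotropy $G_x=\{\sigma\in {\cal G}_1 : s(\sigma)=t(\sigma)=x\}$ is finite, and there exists a $G_x$-invariant open slice $\wt U_x\subset {\cal G}_0$ such that $(\wt U_x, G_x, \pi_x)$ is an orbifold chart at $|x|\in |{\cal G}|$, with $\pi_x$ the orbit projection; similarly for ${\cal H}$.

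For the forward implication, assume $f$ is a covering homomorphism, so $f_0$ is a covering map and ${\cal H}_1={\cal G}_1\times_{{\cal G}_0}{\cal H}_0$ with source equal to the second projection and target equal to the ${\cal G}$-action. Fix $y\in {\cal H}_0$, set $x=f_0(y)$, and shrink a slice $\wt U_x$ so that $f_0^{-1}(\wt U_x)$ is a disjoint union of homeomorphic copies. The fibered-product description forces the isotropy $H_y$ of $y$ in ${\cal H}$ to equal $\{\sigma\in G_x : \sigma\cdot y=y\}\leq G_x$, and this subgroup preserves the component $V_y$ of $f_0^{-1}(\wt U_x)$ containing $y$. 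Then $(V_y, H_y, \pi_y)$ is a chart for $|{\cal H}|$ at $|y|$, and the restriction of $|f|$ to it factors as $V_y/H_y\cong \wt U_x/H_y\to \wt U_x/G_x$, which is precisely the local model of an orbifold covering map.

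Conversely, assume $|f|$ is an orbifold covering. For each slice chart $(\wt U_x, G_x, \pi_x)$ of $|{\cal G}|$, the preimage in $|{\cal H}|$ is, by hypothesis, a disjoint union of orbifold charts of the form $(\wt U_x, H_j, \psi_j)$ with $H_j\leq G_x$. Lifting these charts to slices in ${\cal H}_0$ and matching them with the slice charts supplied by Example \ref{nice} shows that $f_0$ is a covering map, and that the local morphism space of ${\cal H}$ over each such slice is the pullback of ${\cal G}_1$ along $f_0$. Patching these local identifications together, using that $f$ is a Lie groupoid homomorphism, yields the global identification ${\cal H}_1\cong {\cal G}_1\times_{{\cal G}_0}{\cal H}_0$ with $f_1$ the first projection.

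The delicate step is the reverse direction. The main obstacle is to verify that purely local orbifold covering data force the precise fibered-product identification of morphism spaces; this relies on the fact that over a slice the morphism space of an orbifold Lie groupoid is fully determined by the action of the finite local group, so $f$ being a Lie groupoid homomorphism leaves only one possible lift, namely the action-groupoid lift, and these local identifications glue coherently on overlaps.
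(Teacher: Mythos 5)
The paper does not actually prove this lemma; it defers entirely to [\cite{ALR07}, p.~40]. So your local-slice strategy --- linearize each orbifold groupoid near a point as the action groupoid of its finite isotropy group on a slice, and match that normal form against the chart-level definition of an orbifold covering --- is supplying the argument the paper omits, and it is the standard route taken in that reference. Two remarks on the forward direction. First, you have the cover and base swapped throughout: in the lemma $f:{\cal G}\to{\cal H}$ and $|f|:|{\cal G}|\to|{\cal H}|$ is the covering, so ${\cal G}$ is the cover and the paper's Definition requires ${\cal G}_1\cong{\cal H}_1\times_{{\cal H}_0}{\cal G}_0$ with $f_0:{\cal G}_0\to{\cal H}_0$ the covering map; you instead write ${\cal H}_1\cong{\cal G}_1\times_{{\cal G}_0}{\cal H}_0$ and take $f_0$ going from ${\cal H}_0$ to ${\cal G}_0$. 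This is a consistent relabelling and the local computation (isotropy of a lifted point is the stabilizer, inside the base's local group, of that point in the fiber, giving the local model $\wt U/G_x\to\wt U/H_y$) is correct once the names are fixed; you should also spell out the even-covering decomposition of the preimage of a chart, not just the local model over a single lift.

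The converse is where the genuine gap is, and it is not merely a matter of detail. You assert that over a slice ``the morphism space of an orbifold Lie groupoid is fully determined by the action of the finite local group, so $f$ being a Lie groupoid homomorphism leaves only one possible lift,'' and that these local identifications glue to give the global fibered-product identity. That is false as stated: the orbit-space map $|f|$ cannot see the difference between ${\cal G}_1$ and the fibered product. Concretely, let ${\cal H}$ be the unit groupoid of an open set $U$ and let ${\cal G}$ be the \v{C}ech groupoid of the trivial double cover $U\sqcup U\to U$ (objects $U\sqcup U$, four copies of $U$ as morphisms). The fold maps define a homomorphism $f:{\cal G}\to{\cal H}$ with $|{\cal G}|=U$ and $|f|=\mathrm{id}_U$, which is certainly an orbifold covering; yet ${\cal H}_1\times_{{\cal H}_0}{\cal G}_0\cong U\sqcup U$ has two components while ${\cal G}_1$ has four, so $f$ is not a covering homomorphism. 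Thus the converse requires more than local chart data: one must use the precise sense in which $|f|$ is an orbifold covering \emph{compatibly with the charts induced by $f$ itself}, together with effectiveness/\'etaleness, to force $f_1$ to be injective on each source-fiber and hence identify ${\cal G}_1$ with the pullback. Your proposal names this as ``the delicate step'' but does not carry it out, and any correct write-up must explain why examples like the one above are excluded by the hypotheses being used.
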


\begin{proof} See [\cite{ALR07}, p. 40] for a proof.\end{proof}

\begin{exm}\label{orbifold}{\rm If a group $H$ acts effectively and properly
    discontinuously on a manifold $\wt M$, and $H'$ is a subgroup of $H$, then the  
    homomorphism ${\cal G}(\wt M, H')\to {\cal G}(\wt M, H)$
    (Example \ref{quotient}) is a covering homomorphism.
  In particular, if a finite group $H$ acts effectively
  on a manifold $\wt M$, then ${\cal G}(\wt M)\to {\cal G}(\wt M, H)$
  is a covering homomorphism.}\end{exm}

It is well known that two orbifold Lie groupoids induce 
equivalent orbifold structures on $M$ if and only if they are Morita equivalent
(\cite{MP97}, \cite{Moe02}).

In our situation of translation orbifold Lie groupoids we see in the
following lemma, that when two translation orbifold Lie groupoids are
Morita equivalent then in the Morita equivalence, the third orbifold Lie groupoid also
can be chosen to be a translation orbifold Lie groupoid. We need this
lemma for the proof of Theorem \ref{fmt}.

\begin{lemma}\label{morita} Let two translation orbifold Lie
groupoids ${\cal G}(M_1, H_1)$ and ${\cal G}(M_2, H_2)$
are inducing equivalent orbifold structures on $M$. Then there is a
third translation orbifold Lie groupoid ${\cal G}(M_3, H_3)$, which is equivalent to
both ${\cal G}(M_1, H_1)$ and ${\cal G}(M_2, H_2)$.\end{lemma}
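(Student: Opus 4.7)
The plan is to take $\cal G(M_3, H_3)$ to be the translation groupoid attached to the universal orbifold cover of $M$, and then to exhibit direct equivalences from it to each of $\cal G(M_1, H_1)$ and $\cal G(M_2, H_2)$ in the sense of Definition \ref{moritadef}. Since $M$ admits the translation presentation $\cal G(M_1, H_1)$, it is a developable orbifold; in particular its universal orbifold cover $\wt M$ is a (Hausdorff smooth) manifold, and $\G := \pi_1^{orb}(M)$ acts on $\wt M$ effectively and properly discontinuously. I would set $(M_3, H_3) := (\wt M, \G)$, so that $\cal G(\wt M, \G)$ is a translation orbifold Lie groupoid inducing $M$.

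To compare $\cal G(\wt M, \G)$ with $\cal G(M_j, H_j)$, I would invoke the orbifold Galois correspondence: the map $M_j \to M_j/H_j = M$ is a Galois orbifold cover with deck group $H_j$, so it corresponds to a normal subgroup $K_j \lhd \G$ satisfying $\G/K_j \cong H_j$ and $\wt M/K_j \cong M_j$. Crucially, because $M_j$ is a manifold rather than a genuine orbifold, $K_j$ must act freely on $\wt M$. Writing $p_j : \wt M \to M_j$ for the quotient and $q_j : \G \to H_j$ for the projection, the equivariance $p_j(\g x) = q_j(\g) p_j(x)$ shows that $(f_j)_0 := p_j$ together with $(f_j)_1(x, \g) := (p_j(x), q_j(\g))$ defines a homomorphism $f_j : \cal G(\wt M, \G) \to \cal G(M_j, H_j)$.

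The final step is to verify that each $f_j$ satisfies the two conditions of Definition \ref{moritadef}. Condition (i) reduces to the observation that the composite in question is the smooth action map $\wt M \times H_j \to M_j$, $(x, h) \mapsto h \cdot p_j(x)$, which is a surjective submersion because $p_j$ is. Condition (ii), the fiber-product square at the morphism level, requires that for any $(x, y) \in \wt M \times \wt M$ and $h \in H_j$ with $h \cdot p_j(x) = p_j(y)$ there is a unique $\g \in \G$ with $\g x = y$ and $q_j(\g) = h$. Existence follows by lifting $h$ to some $\tilde \g \in \G$ and correcting by the unique $k \in K_j$ with $k \tilde\g x = y$. The uniqueness clause is the main obstacle and the crux of the argument: two such $\g$'s would differ by an element of $K_j$ stabilizing $x$, and it is precisely the freeness of the $K_j$-action on $\wt M$, which holds here because $M_j$ is a manifold, that collapses this to the trivial element. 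Without this freeness one would only obtain a Morita equivalence through a non-translation intermediate groupoid, so this step is exactly what makes the lemma go through.
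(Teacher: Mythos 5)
Your proof is correct and follows essentially the same strategy as the paper's: construct a common regular covering of $M$, present it as a translation groupoid with its deck group, map down to each ${\cal G}(M_j,H_j)$ via the covering projection together with the quotient homomorphism of deck groups, and verify the two conditions of Definition \ref{moritadef}, with the fibered-product condition resting on a unique-lifting (freeness of the kernel's action) argument exactly as in the paper. The only difference is your choice of $M_3$ as the universal orbifold cover with $H_3=\pi_1^{orb}(M)$, whereas the paper takes the cover corresponding to $(p_1)_*\pi_1(M_1)\cap(p_2)_*\pi_1(M_2)$; both are manifolds covering $M_1$ and $M_2$, so the verification is identical.
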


\begin{proof}
Let $p_1:(M_1, m_1)\to (M, m)$ and
$p_2:(M_2, m_2)\to (M, m)$ be the orbifold covering projections, with groups of covering
transformations $H_1$ and $H_2$, respectively. Here $m\in M$ is a smooth point.
Consider the orbifold covering
$M_3$ of $M$ corresponding to the subgroup
$$K:=(p_1)_*(\pi_1(M_1,m_1))\cap (p_2)_*(\pi_1(M_2, m_2)) < \pi_1^{orb}(M,m).$$
Let $H_3$ be
the group of covering transformation of the orbifold covering map $p_3:M_3\to M$.

Then, we will establish the following diagram. 

\centerline{
  \xymatrix{{\cal G}(M_1, H_1)&\ar[l]{\cal G}(M_3, H_3)\ar[r]&{\cal G}(M_2, H_2).}}

\noindent
We will first define the arrows and then show that they are, in
fact, homomorphisms and equivalences.

We just need to check it for one of these arrows, since the
same proof will work for the other one as well.

Denote ${\cal G}(M_3, H_3)$ by $\cal G$ and ${\cal G}(M_1, H_1)$ by
$\cal H$. Let $p:(M_3,m_3)\to (M_1, m_1)$ be the
covering map corresponding to the subgroup $(p_1)_*^{-1}(K)$ of
$\pi_1(M_1, m_1)$, and let 

\centerline{
  \xymatrix{
    \rho:H_3={\pi_1^{orb}(M, m)}/p_{3*}({\pi_1(M_3, m_3)})\ar[r] &{\pi_1^{orb}(M, m)}/p_{1*}({\pi_1(M_1, m_1)})=H_1}}
\noindent
be the quotient homomorphism. Note that, $p$ is a genuine covering map of manifolds. 

Then define $f_0=p$ and $f_1=(p, \rho):M_3\times H_3\to M_1\times H_1$. The following
commutative diagram shows that $f:{\cal G}\to {\cal H}$ is a homomorphism, where $h_3\in H_3$.

\centerline{
  \xymatrix{M_3\ar[rr]^{h_3}\ar[d]^p&&M_3\ar[d]^p\\
    M_1\ar[rr]^{\rho(h_3)}\ar[dr]^{p_1}&&M_1\ar[dl]_{p_1}\\
    &M&}}
\medskip

Now we check that $f$ is an equivalence.  See Definition \ref{moritadef}.

    The first condition in the definition of an equivalence says that the composition

    \centerline{
        \xymatrix{{\cal H}_1\times_{{\cal H}_0}{\cal G}_0\ar[r]^{\ \ \ \ \pi_1} &{\cal H}_1\ar[r]^t &{\cal H}_0}}
\noindent
      should be a surjective submersion. In our situation it takes the following form.
      
    \centerline{
      \xymatrix{
        (M_1\times H_1)\times_{M_1}M_3\ar[r]^{\ \ \ \ \ \pi_1}&(M_1\times H_1)\ar[r]^t& M_1.}}
    \noindent
    The fibered product is with respect to $s$ (first projection) and $p$.
    Since $p$ and $t$ are both surjective submersions, it
        follows that $t\circ\pi_1$ is a surjective submersion.

        Next, we have to check the second condition in the definition of an equivalence,
        that is, we have to show that the following diagram is a fibered product in the
        category of manifolds.

        \centerline{
          \xymatrix@C+1pc{{\cal G}_1\ar[r]^{f_1}\ar[d]^{(s,t)}& {\cal H}_1\ar[d]^{(s,t)}\\
            {\cal G}_0\times {\cal G}_0\ar[r]^{\!\!\!{f_0\times f_0}}&{\cal H}_0\times {\cal H}_0.}}

        In this case, the diagram takes the form.
        
        \centerline{
          \xymatrix{M_3\times H_3\ar[r]^{(p, \rho)}\ar[d]^{(s,t)}&M_1\times H_1\ar[d]^{(s,t)}\\
            M_3\times M_3\ar[r]^{\!\!\!(p,p)}&M_1\times M_1.}}
        \noindent
        Recall that, here $(s,t)(m_3, h_3)=(m_3, h_3(m_3))$ for $(m_3, h_3)\in M_3\times H_3$, and similarly, 
        $(s,t)(m_1, h_1)=(m_1, h_1(m_1))$ for $(m_1, h_1)\in M_1\times H_1$.

        To check that the above diagram is a fibered product, we have to complete the
        following commutative diagram by defining the
        dashed arrow $l$, such that the whole diagram commutes.
         Here $A$ is a manifold
        and the maps $i$, $j$ and $k$ are smooth. The two lower right hand side
        triangles are given to be commutative, that is,
        $(s,t)\circ i=k=(p,p)\circ j$. 
For $a\in A$ let $i(a)=(m_1(a), h_1(a))$ and $j(a)=(m_3(a), m_3'(a))$. Then, we get 
$k(a)=(p(m_3(a)), p(m_3'(a)))=(m_1(a), h_1(a)(m_1(a)))$.

        \centerline{
          \xymatrix{M_3\times H_3\ar[rr]^{(p, \rho)}\ar[dd]^{(s,t)}&&M_1\times H_1\ar[dd]^{(s,t)}\\
            &A\ar [ru]^i\ar[rd]^k\ar[ld]_j\ar@{-->}[lu]_l&\\
            M_3\times M_3\ar[rr]^{(p,p)}&&M_1\times M_1.}}
        
Next, consider the following diagram. The unique map $h_3(a)$ is obtained by
        lifting the composition of the horizontal maps using lifting
        criterion of covering space theory.
Now, we define $l(a)=(m_3(a), h_3(a))$. This completes the proof that $f$ is an equivalence of
        Lie groupoids.

        \centerline{
          \xymatrix@C+1pc{&&(M_3, m_3'(a))\ar[d]^p\\
            (M_3,m_3(a))\ar[r]^p\ar@{-->}[rru]^{h_3(a)}&(M_1,m_1(a))\ar[r]^{\!\!\! h_1(a)}&(M_1,p(m_3'(a))).}}

        This completes the proof of the lemma.\end{proof}

We are now ready to recall the following definition.
  
  \begin{defn}\label{aspherical}{\rm Let $\cal G$ be
      an orbifold Lie groupoid inducing an orbifold structure on $M$.
      Then, the $k$-th {\it orbifold homotopy group}
      $\pi_k^{orb}(M, x_0)$ of $M$ is defined as the homotopy group $\pi_k({\cal G}, \wt x_0)$
      for some $\wt x_0\in {\cal G}_0$ lying above a base point $x_0\in M$. $M$  is called
      {\it aspherical} if $\cal G$ is aspherical.}\end{defn}

    The fundamental group of an orbifold Lie groupoid ${\cal G}$ inducing the
    orbifold structure on $M$, 
is also called the {\it orbifold fundamental group} of the orbifold $M$.
This group is identified with the standard definition
of orbifold fundamental group of $M$ (see \cite{Thu91}). Hence, by
Lemmas \ref{covering} and \ref{covering-lemma}, we get the following lemma.

\begin{lemma}\label{orbcov} An orbifold covering map
induces isomorphisms on higher homotopy groups, and an injection on orbifold
fundamental groups.\end{lemma}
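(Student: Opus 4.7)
The plan is to deduce this lemma by combining the three preceding structural results: Definition \ref{aspherical} (which identifies orbifold homotopy groups with homotopy groups of an inducing Lie groupoid), Lemma \ref{covering-lemma} (which matches covering homomorphisms of orbifold Lie groupoids with orbifold covering maps of underlying orbifolds), and Lemma \ref{covering} (which transfers the desired homotopy-group statement from Lie groupoids to their classifying spaces).

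More concretely, given an orbifold covering map $p:\wt M\to M$, I would first fix an orbifold Lie groupoid $\cal H$ inducing the orbifold structure on $M$. By the reverse implication of Lemma \ref{covering-lemma} (the construction alluded to at [ALR07, p.~40], which is essentially a pullback/action-groupoid construction over $p$), there exists an orbifold Lie groupoid $\cal G$ inducing the orbifold structure on $\wt M$, together with a covering homomorphism $f:\cal G\to\cal H$ of Lie groupoids such that $|f|=p$ up to the identifications $|{\cal G}|\cong\wt M$ and $|{\cal H}|\cong M$.

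Now Lemma \ref{covering} applies directly to $f$: the induced map $Bf:B{\cal G}\to B{\cal H}$ is a covering map of topological spaces, so by classical covering-space theory it induces isomorphisms $\pi_k(B{\cal G},\wt x_0)\xrightarrow{\cong}\pi_k(B{\cal H},x_0)$ for all $k\ge 2$, and an injection $\pi_1(B{\cal G},\wt x_0)\hookrightarrow\pi_1(B{\cal H},x_0)$ on fundamental groups, where $\wt x_0\in{\cal G}_0$ lies above a chosen basepoint $x_0\in{\cal H}_0$. Translating via Definition \ref{aspherical} (and the remark identifying $\pi_1$ of the classifying space with the orbifold fundamental group), these maps are exactly the maps $\pi_k^{orb}(\wt M,\wt x_0)\to\pi_k^{orb}(M,x_0)$ induced by $p$, which gives the assertion.

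There is no serious obstacle here; the only point that requires care is ensuring the existence of the covering homomorphism $f$ realizing a given orbifold covering map $p$, i.e.\ the ``only if'' part of Lemma \ref{covering-lemma} used in the direction we need. Since that assertion has already been invoked (with proof deferred to \cite{ALR07}), the argument above is a genuine one-line combination of Definition \ref{aspherical} and Lemmas \ref{covering} and \ref{covering-lemma}.
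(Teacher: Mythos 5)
Your proposal is correct and follows essentially the same route as the paper, which derives the lemma immediately from Lemmas \ref{covering} and \ref{covering-lemma} together with the identification of orbifold homotopy groups with homotopy groups of an inducing Lie groupoid. The only difference is that you spell out (and rightly flag) the step of realizing a given orbifold covering map by a covering homomorphism of Lie groupoids, which the paper leaves implicit.
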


An useful immediate corollary of the lemma is the following.

\begin{cor}\label{orbcovcor} Let $M$ be a connected orbifold and $p:\wt M\to M$ be an  
  orbifold covering map. Assume that $\wt M$ is connected and has no
  singular points.
  Then $p_*:\pi_k(\wt M)\to \pi_k^{orb}(M)$ is
  an isomorphism for all $k\geq 2$.\end{cor}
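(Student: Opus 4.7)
My plan is to reduce the corollary to Lemma \ref{orbcov} by identifying the orbifold homotopy groups of $\wt M$ with its ordinary homotopy groups. Since $\wt M$ is connected and has no singular points, it is a genuine manifold, and an orbifold Lie groupoid inducing its orbifold structure may be taken to be the unit groupoid ${\cal G}(\wt M)$ from Example \ref{tg}. For the unit groupoid, the iterated fibered products ${\cal G}(\wt M)_k$ that define the simplicial manifold ${\cal G}(\wt M)_\bullet$ all reduce to $\wt M$, and the face and degeneracy maps are identities. Hence $B{\cal G}(\wt M)$ is homotopy equivalent to $\wt M$. By Definition \ref{HG} this gives
\[
\pi_k^{orb}(\wt M,\wt x_0) \;=\; \pi_k(B{\cal G}(\wt M),\wt x_0) \;=\; \pi_k(\wt M,\wt x_0)
\]
for every $k\geq 0$.

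Next, I would apply Lemma \ref{orbcov} to the given orbifold covering $p:\wt M\to M$: this yields isomorphisms $p_*:\pi_k^{orb}(\wt M)\to \pi_k^{orb}(M)$ for all $k\geq 2$. Composing with the identification above produces the required isomorphism $p_*:\pi_k(\wt M)\to \pi_k^{orb}(M)$ for $k\geq 2$.

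The only point requiring a bit of care, and the closest thing to an obstacle, is compatibility: one must check that the map $p_*$ appearing in the statement (obtained from the covering of underlying spaces, which happens to be a genuine covering because $\wt M$ is singularity-free) agrees, under the identifications $\pi_k^{orb}(\wt M)\cong \pi_k(\wt M)$ and $\pi_k^{orb}(M)=\pi_k(B{\cal G})$, with the map induced by the covering homomorphism of Lie groupoids supplied by Lemma \ref{covering-lemma}. This is however immediate from the functoriality of the classifying space construction applied to the homomorphism ${\cal G}(\wt M)\to {\cal G}$ lifting $p$, so no real difficulty arises.
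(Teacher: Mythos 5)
Your argument is correct and matches the paper's intent exactly: the paper gives no written proof, presenting the corollary as an immediate consequence of Lemma \ref{orbcov} once one notes that a singularity-free $\wt M$ has $\pi_k^{orb}(\wt M)\cong\pi_k(\wt M)$ via the unit groupoid. Your write-up simply makes explicit the identification $B{\cal G}(\wt M)\simeq \wt M$ and the compatibility of the induced maps, which is exactly the "immediate" deduction the paper has in mind.
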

  
In general an orbifold need not be the quotient of a manifold by
an effective and properly discontinuous
action of a discrete group. For example, the sphere with one cone point and the
sphere with two cone points of different orders are examples of closed $2$-dimensional
orbifolds, which are not covered by manifolds. See \cite{Thu91}. Also see
Proposition 1.54 and Conjecture 1.55 in \cite{ALR07} for some more general
discussion.

It is standard to call a connected orbifold $M$ {\it good}
    or {\it developable} if there is a manifold
    $\wt M$ and an orbifold covering map $\wt M\to M$.

  We will need the following extension of the main theorem of \cite{Rou19}, which says that 
  a connected $2$-dimensional orbifold with finitely generated and infinite orbifold
  fundamental group is good.

  \begin{prop}\label{infgen} Let $M$ be a connected $2$-dimensional orbifold, with infinite
    orbifold fundamental group. Then $M$ is good.\end{prop}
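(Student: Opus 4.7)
My plan is to reduce the proposition to the main theorem of \cite{Rou19}, which handles the finitely generated case, via an exhaustion argument on compact sub-orbifolds.

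First, I would observe that a compact $2$-dimensional orbifold has only finitely many singular strata (finitely many cone points, and finitely many reflector and corner-reflector components), and hence its orbifold fundamental group is finitely presented by the Seifert--Van Kampen theorem for orbifolds. Therefore, if $\pi_1^{orb}(M)$ is not finitely generated, $M$ must be non-compact. If $\pi_1^{orb}(M)$ is finitely generated, then since it is also infinite by hypothesis, the main theorem of \cite{Rou19} applies directly, so we may restrict attention to the case where $M$ is non-compact with non-finitely-generated $\pi_1^{orb}(M)$.

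In this remaining case, I would exhaust $M$ by a nested sequence of connected compact $2$-dimensional sub-orbifolds $M_1 \subset M_2 \subset \cdots$ with $M = \bigcup_i M_i$, chosen so that each $\partial M_i$ lies in the regular locus of $M$ and consists of simple closed curves that are incompressible in $M$. Under these conditions, a Seifert--Van Kampen argument for orbifolds ensures that each inclusion-induced map $\pi_1^{orb}(M_i) \to \pi_1^{orb}(M)$ is injective. Each $M_i$, being a compact $2$-orbifold with non-empty boundary, has orbifold fundamental group isomorphic to a free product of finite and infinite cyclic groups; such groups are virtually free, and consequently each $M_i$ is good.

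The conclusion then follows from a local-to-global criterion for developability: since each $M_i$ is good, the local isotropy homomorphisms $G_x \to \pi_1^{orb}(M_i)$ are injective for every $x \in M_i$. Composing with $\pi_1^{orb}(M_i) \hookrightarrow \pi_1^{orb}(M)$ yields injectivity of $G_x \to \pi_1^{orb}(M)$ for every $x \in M$, and by the standard developability criterion for orbifolds (see \cite{MP97} or \cite{ALR07}), this implies $M$ is good. The main obstacle is constructing the exhaustion so that the inclusion maps on orbifold fundamental groups are injective; for $2$-orbifolds this is tractable via standard surface-theoretic techniques, choosing $\partial M_i$ to be a finite union of disjoint simple closed curves in the regular locus that are each incompressible in $M$, but some care is needed when the singular locus of $M$ has accumulation behavior at the ends.
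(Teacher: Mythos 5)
Your proposal reduces to the finitely generated case of \cite{Rou19} just as the paper does, but then diverges completely. The paper's route through the infinitely generated case is homotopy-theoretic: after doubling along the orbifold boundary (via [\cite{Rou19}, Lemma 2.1]) to dispose of reflector lines and corner reflectors, it writes $M$ as an increasing union of finite-type suborbifolds $M(r_i,s_i)$ with infinite orbifold fundamental group, uses the fact that orbifold homotopy groups commute with this direct limit to conclude that $M$ is aspherical, and then applies Lytchak's theorem \cite{Lyt13} that a contractible orbifold is a manifold to see that the universal orbifold cover is a manifold. Notably, the paper never needs the inclusions $\pi_1^{orb}(M_i)\to\pi_1^{orb}(M)$ to be injective, so its exhaustion is essentially free to construct. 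Your route instead goes through the group-theoretic developability criterion (injectivity of all local groups $G_x\to\pi_1^{orb}(M)$ implies goodness), which trades Lytchak's theorem for Haefliger's theory of developments, and requires the harder, $\pi_1^{orb}$-injective exhaustion.

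Two concrete gaps. First, the ``standard developability criterion'' you invoke is not in \cite{MP97} or \cite{ALR07}; it is a theorem about complexes of groups (Bridson--Haefliger, Ch.\ III.$\mathcal{C}$ and III.$\mathcal{G}$: the development $D(\mathcal Y,\phi)$ associated to the canonical morphism to $\pi_1$ is a simply connected manifold cover precisely when the local groups inject). The statement is true, but in the framework of this paper it would itself need proof or a correct citation, and it is the crux of your argument. Second, you do not address reflector lines and corner reflectors: your claim that $\pi_1^{orb}(M_i)$ is a free product of finite and infinite cyclic groups fails in their presence (corner reflectors contribute dihedral local groups), and the notion of a boundary curve ``in the regular locus'' needs care when the orbifold boundary is nonempty; the paper's doubling step exists precisely to eliminate this case and you should import it. Finally, the $\pi_1^{orb}$-injective exhaustion is asserted rather than constructed: what you actually need is that each curve of $\partial M_i$ has infinite order in the orbifold fundamental group of \emph{each} adjacent complementary piece (in particular, bounds no disk with at most one cone point on either side), so that the orbifold Van Kampen decomposition is a genuine graph of groups and Bass--Serre theory gives injectivity of the vertex groups. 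This is arrangeable for an infinite-type $2$-orbifold, but it is where the real work of your approach lies, and it is work the paper's argument avoids entirely.
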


  \begin{proof} First, recall that a $2$-dimensional orbifold has three different types of singular sets:
    cone points, reflector lines and corner reflectors (See [\cite{Sco83}, p. 422]).
    The points on the reflector lines and corner reflectors contributes to the
    boundary of the underlying space, called {\it orbifold boundary}.
    Hence, after taking a double of the underlying
    space along this orbifold  boundary components, and then applying
    [\cite{Rou19}, Lemma 2.1], we get an
    orbifold double cover of $M$ which has only cone points.

    Therefore, we can assume that the orbifold $M$ has only cone points. Also,
    we replace each manifold boundary component with a puncture. Clearly,
    this does not affect the orbifold fundamental group of $M$.
    
    Note that, if $\pi_1^{orb}(M)$ is finitely generated, then the proposition
    follows from [\cite{Rou19}, Theorem 1.1]. If it is infinitely
    generated, then $M$ either has infinite genus or has infinitely
    many punctures or infinitely many cone points.

    Hence, we can write $M$ as an infinite increasing union of orbifolds of the type
    $M(r_i, s_i)$, $i\in {\Bbb N}$. Each $M(r_i, s_i)$ has finite genus, $r_i$ number of
    punctures and $s_i$ number of cone points. Furthermore, we can assume that $M(r_i, s_i)$ has
    infinite orbifold fundamental group. Then clearly, $M(r_i, s_i)$ is aspherical,
    since they are all good orbifolds with infinite orbifold fundamental groups. Hence, a direct
    limit argument shows that $M$ is also aspherical,
    since the orbifold homotopy groups are covariant
    functors. Therefore, by Lemma \ref{orbcov}, the universal orbifold covering 
    $\wt M$ of $M$ has all the orbifold homotopy groups trivial.
    We now apply \cite{Lyt13} to conclude that $\wt M$ is a manifold. Hence $M$
    is a good orbifold.
  \end{proof}

  \section{Asphericity}\label{qfa}
In this section we 
state the Asphericity conjecture and Theorem \ref{mt},  
in the general set up of the category of Lie groupoids.

If $M$ is an orbifold, then $PB_n(M)$ is an orbifold, since it is
an open set in the product orbifold $M^n$.

Let $H$ be a group acting effectively and properly
discontinuously on a connected manifold $\wt M$. Then the 
quotient $M=\wt M/H$ has an orbifold structure.

Consider the space $PB_n(\wt M, H)$ of $n$-tuples of points of
$\wt M$ with pairwise distinct orbits, defined in the Introduction.
Then, $H^n$ acts effectively and properly
discontinuously on $PB_n(\wt M, H)$, with quotient, the orbifold $PB_n(M)$.
Hence, $PB_n({\cal G}({\wt M}, H))$, the configuration Lie groupoid
of $n$ points of ${\cal G}(\wt M,H)$ is the corresponding
translation orbifold Lie groupoid ${\cal G}(PB_n(\wt M,H), H^n)$. 
Recall that, for a good orbifold $M$, we can have many
configuration Lie groupoids associated
to different regular orbifold coverings of $M$. 
Also, clearly given two such orbifold coverings, the corresponding
configuration Lie groupoids will induce equivalent orbifold structures on $PB_n(M)$.

In [\cite{Rou20}, Theorem 2.10] we proved that the homomorphism
$f({\cal G}):PB_n({\cal G})\to PB_{n-1}({\cal G})$
is a $b$-fibration for a $c$-groupoid $\cal G$. We had also shown
that, this is the best possible class of Lie groupoids to which this
fibration result can be proven. In the following proposition, we
deduce a more general statement.

A surjective map $g:X\to Y$ is called a {\it quasifibration},
if $g:(X, g^{-1}(y))\to (Y,y)$ is a weak homotopy equivalence, for all $y\in Y$ (\cite{DT58}).
Hence, a fibration is a quasifibration, and also a quasifibration
induces a long exact sequence of homotopy groups.

\begin{prop}\label{nonfibration} Let $H$ be a finite group, 
  acting effectively on a connected 
    manifold $\wt M$ of dimension $m\geq 2$, with at least
    one fixed point. Assume that the integral homology group $H_m(\wt M, {\Bbb Z})$ is
    finitely generated. Then the projection map $f_0:PB_n(\wt M, H)\to PB_{n-1}(\wt M, H)$ is not
    a quasifibration.\end{prop}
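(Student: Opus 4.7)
I would argue by contradiction: assume $f_0$ is a quasifibration and exhibit two points in $PB_{n-1}(\wt M, H)$ whose fibers under $f_0$ have non-isomorphic integral homology in degree $m-1$. Since a quasifibration has each fiber weakly homotopy equivalent to the homotopy fiber, all fibers over a single path component of the base are weakly equivalent; non-isomorphic homology will then furnish the contradiction. First I would verify that $PB_{n-1}(\wt M, H)$ is path-connected: because $\wt M$ is a connected manifold of dimension $m\geq 2$ and each orbit $Hz$ is finite, any two admissible $(n-1)$-tuples can be joined by sliding one coordinate at a time through the complement of finitely many orbits, which is itself connected in dimension $\geq 2$.

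\textbf{Two base configurations.} Let $p\in \wt M$ be a fixed point of $H$ (guaranteed by hypothesis), and assume $|H|>1$, since otherwise $f_0$ is just the ordinary Fadell--Neuwirth projection. As the action is smooth and effective, the union of fixed sets of nontrivial elements of $H$ is a closed set of positive codimension, so the locus of points in free orbits is open and dense. I would inductively pick $(x_1,\dots,x_{n-1})$ lying in pairwise distinct free orbits, and a second tuple $(p,x_2',\dots,x_{n-1}')$ with $x_2',\dots,x_{n-1}'$ in pairwise distinct free orbits disjoint from $\{p\}$. The fiber of $f_0$ over $(z_1,\dots,z_{n-1})$ is $\wt M\setminus \bigcup_i Hz_i$, so the first configuration has fiber $\wt M$ with $(n-1)|H|$ punctures and the second with $1+(n-2)|H|$ punctures, the cardinalities differing by $|H|-1>0$.

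\textbf{Homology comparison and main obstacle.} For any finite $S\subset \wt M$, excision around each point of $S$ gives $H_k(\wt M,\wt M\setminus S)\cong {\Bbb Z}^{|S|}$ for $k=m$ and vanishes otherwise. Since $\wt M\setminus S$ is a noncompact connected $m$-manifold its top homology vanishes, so the long exact sequence of the pair collapses to
\begin{equation*}
0 \to H_m(\wt M) \to {\Bbb Z}^{|S|} \to H_{m-1}(\wt M\setminus S) \to H_{m-1}(\wt M) \to 0.
\end{equation*}
The finite generation of $H_m(\wt M;{\Bbb Z})$ then yields the rank identity ${\rm rank}\, H_{m-1}(\wt M\setminus S) = |S| + {\rm rank}\, H_{m-1}(\wt M) - {\rm rank}\, H_m(\wt M)$, which is strictly increasing in $|S|$; hence the two fibers differ in the rank of $H_{m-1}$, cannot be weakly equivalent, and the quasifibration hypothesis must fail. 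The main obstacle is pinpointing an invariant that actually distinguishes the two fibers, and this is precisely the role of the finite generation hypothesis on $H_m(\wt M)$: without it, the map $H_m(\wt M)\to {\Bbb Z}^{|S|}$ could conceivably absorb the change in $|S|$ and spoil the rank comparison.
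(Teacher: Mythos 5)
Your argument is essentially the paper's: both proofs pick one base configuration through a point with nontrivial isotropy and one through a nearby regular configuration, observe that the two fibers are $\wt M$ with different finite numbers of points removed, and conclude because all fibers of a quasifibration over a path-connected base are weakly homotopy equivalent. You are in fact more careful than the paper on two points: you check path-connectedness of $PB_{n-1}(\wt M,H)$, and you correctly locate the distinguishing invariant in $H_{m-1}$ of the fiber via the pair sequence and excision (the paper writes ``dimension $m$'', but the top integral homology of the punctured, hence noncompact, connected $m$-manifold vanishes for both fibers, so the degree must be $m-1$). One caveat, which the paper's proof shares: your rank identity $\operatorname{rank} H_{m-1}(\wt M\setminus S)=|S|+\operatorname{rank} H_{m-1}(\wt M)-\operatorname{rank} H_m(\wt M)$ only separates the two fibers when $H_{m-1}(\wt M)$ has finite rank; if it is infinite (e.g.\ an infinite-genus surface, for which $H_2(\wt M,{\Bbb Z})=0$ is still finitely generated) both fibers have $H_{m-1}$ of infinite rank and the comparison degenerates, so at that level of generality a finer invariant or an extra hypothesis is needed.
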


  \begin{proof} By hypothesis, there is a point $s\in \wt M$, so that
  the isotropy group $H_s=\{h\in H\ |\ hs=s\}$ is nontrivial.  
  There is a neighbourhood $U_s\subset \wt M$ of $s$ preserved by $H_s$ and 
  $hU_s\cap U_s=\emptyset$ for all $h\in H - H_s$. Such a
  neighbourhood exists, see the proof of [\cite{Thu91}, Proposition 5.2.6].
  Since regular points are dense in $\wt M/H$, there is a point $s'\in U_s$
  which has trivial isotropy group. That is, $s'$ corresponds
  to a regular point and $s$ corresponds to a singular point on $\wt M/H$.

   Choose a point $x=(s, x_2,\ldots, x_{n-1})\in PB_{n-1}(\wt M, H)$, such that
  $Hx_i\neq Hs'$ for all $i=2,3,\ldots, n-1$. Let $y=(s', x_2,\ldots, x_{n-1})$.
  Note that $|Hs| < |Hs'|$. 
  Then it follows that $f_0^{-1}(x)$ and $f_0^{-1}(y)$ are obtained from
  $\wt M$, by removing $|Hs|$ and $|Hs'|$ number of points,
  respectively.
  Hence, they have
  non-isomorphic integral homology groups in dimension $m$, since
  $H_m(\wt M, {\Bbb Z})$ is finitely generated. Therefore,  $f_0^{-1}(x)$ and $f_0^{-1}(y)$
  are not weak homotopy equivalent. On the other hand, for a
  quasifibration over a path connected space any two fibers are
  weak homotopy equivalent (\cite{Hat03}, chap. 4, p. 479).
  Therefore, $f_0$ is not a quasifibration.\end{proof}

    The examples in the proposition above are the primary reasons why the Fadell-Neuwirth
  fibration theorem does not extend to orbifolds, and more generally to
  Lie groupoids.

\begin{rem}\label{les}{\rm In \cite{JF23}, Flechsig pointed out that the short exact sequence
    proved in \cite{Rou20} is, in fact, a four-term exact
    sequence. That is, the kernel of the homomorphism
    $f(M)_*:\pi_1^{orb}(PB_n(M))\to \pi_1^{orb}(PB_{n-1}(M))$ is not
    isomorphic to the orbifold fundamental group of a generic fiber
    (that is, fiber over a smooth point) of
    $f(M)$, if $M$ is a genus zero $2$-dimensional orbifold with at
    least one 
    puncture and at least one cone point. See \cite{Rou23} for more
    on this matter. Therefore, together with Theorem \ref{mt}, for
    such $M$ we 
    conclude that $f(M)$ is not a quasifibration of orbifolds, that is,
    it does not induce a long exact sequence of orbifold homotopy
    groups. Furthermore, using [\cite{Rou23}, Theorem 2.2], the same
  would not be true in general also if the Asphericity conjecture has
  a positive answer.}\end{rem}

Let ${\cal C}$ be the class of all connected
$2$-dimensional orbifolds with $\pi_1^{orb}(M)$ infinite.
Let $M\in {\cal C}$.
Then, by Proposition \ref{infgen}, $M$ is a good orbifold. For
convenience we denote by ${\cal G}_M$, a translation 
orbifold Lie groupoid ${\cal G}(\wt M, H)$, inducing
the orbifold structure on $M$. Since $M$ is good, there are
many such translation orbifold Lie groupoids.

The following proposition 
justifies the equivalence between the Asphericity conjecture and its orbit
configuration space version, of the Introduction.

\begin{prop}\label{induction} Let $M\in {\cal C}$ and consider a translation 
  orbifold Lie groupoid ${\cal G}(\wt M, H)$, such that $M=\wt M/H$. Then, $PB_n(M)$
  is aspherical if and only if the manifold $PB_n(\wt M, H)$ is aspherical.\end{prop}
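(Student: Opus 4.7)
The plan is to identify the orbifold homotopy groups of $PB_n(M)$ in degrees $\geq 2$ with the ordinary homotopy groups of the manifold $PB_n(\wt M, H)$, using the covering-homomorphism machinery developed in Section \ref{olgqf}. Once this identification is in place, both asphericity conditions reduce to the same statement and the equivalence is immediate.

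The starting point is the identification, already recorded in the paragraphs preceding the proposition, that $PB_n({\cal G}(\wt M, H)) = {\cal G}(PB_n(\wt M, H), H^n)$ as translation orbifold Lie groupoids, and that this groupoid induces the orbifold structure on $PB_n(M)$. So, by Definition \ref{aspherical}, asphericity of $PB_n(M)$ amounts to the vanishing of $\pi_k({\cal G}(PB_n(\wt M, H), H^n))$ for all $k\geq 2$. Applying Example \ref{orbifold} to the trivial subgroup of $H^n$, the natural homomorphism
\[
{\cal G}(PB_n(\wt M, H)) \longrightarrow {\cal G}(PB_n(\wt M, H), H^n)
\]
from the unit groupoid to the translation groupoid is a covering homomorphism of orbifold Lie groupoids. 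Lemma \ref{covering} then yields isomorphisms on $\pi_k$ for every $k\geq 2$. Since the classifying space of a unit groupoid is the underlying manifold itself, one obtains $\pi_k(PB_n(\wt M, H)) \cong \pi_k^{orb}(PB_n(M))$ for every $k\geq 2$, which is precisely the equivalence to be proved.

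Two small preliminaries warrant verification. First, $PB_n(\wt M, H)$ is a connected manifold, being open in $\wt M^n$ with complement a countable union of graphs $\{x_j = hx_i\}$, $h\in H$, each of codimension $\dim\wt M\geq 2$. Second, to reconcile Definition \ref{aspherical} with the universal-orbifold-cover formulation of asphericity used in the introduction, I would note that $PB_n(M)$ is good (Lemma \ref{covering-lemma} produces the orbifold covering $PB_n(\wt M, H)\to PB_n(M)$ from a manifold), so its universal orbifold cover is a manifold and Corollary \ref{orbcovcor} shows that the two notions of asphericity coincide. I do not foresee a substantive obstacle: the required input — identification of the configuration groupoid as a translation groupoid, existence of a covering homomorphism from the associated unit groupoid, and invariance of higher homotopy under coverings — has already been assembled in Section \ref{olgqf}.
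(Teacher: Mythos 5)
Your proposal is correct and follows essentially the same route as the paper: both arguments reduce to the fact that $PB_n(\wt M, H)\to PB_n(M)$ is an orbifold covering with manifold total space (equivalently, that the unit groupoid ${\cal G}(PB_n(\wt M,H))$ covers the translation groupoid ${\cal G}(PB_n(\wt M,H),H^n)$), so that higher homotopy groups are preserved by Lemma \ref{covering} / Corollary \ref{orbcovcor}. The paper phrases this at the level of orbifold covering maps and cites Corollary \ref{orbcovcor} directly, while you work at the groupoid level via Example \ref{orbifold}; these are identified by Lemma \ref{covering-lemma}, so the difference is purely expository.
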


\begin{proof} Note that, by our convention (Example \ref{tg}) $H$ is acting on $\wt M$ effectively and
  properly discontinuously, so that $M=\wt M/H$.
  Therefore, the quotient map $PB_n(\wt M, H)\to PB_n(M)$ is an orbifold
  covering map. Hence by Corollary \ref{orbcov},
  $PB_n(\wt M, H)$ is aspherical if and only if $PB_n(M)$ is aspherical.\end{proof}

\begin{cor} \label{AC} Let $M$ be as in the statement of Theorem \ref{mt}. Then, 
  for any translation orbifold Lie groupoid ${\cal G}_M$,
  $PB_n({\cal G}_M)$ is aspherical.\end{cor}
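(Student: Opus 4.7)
The plan is to reduce Corollary \ref{AC} directly to Theorem \ref{mt} by unwinding the definition of asphericity in the translation orbifold Lie groupoid setting; once the correct identifications are made, the statement becomes essentially a bookkeeping exercise, so the real content sits in Theorem \ref{mt}.

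First I would record the identification, already noted before Proposition \ref{nonfibration}: if ${\cal G}_M = {\cal G}(\wt M, H)$, then
\[
PB_n({\cal G}_M) = {\cal G}(PB_n(\wt M, H), H^n),
\]
which is itself a translation orbifold Lie groupoid inducing the orbifold structure on $PB_n(M)$. Next, applying Example \ref{orbifold} with the trivial subgroup $\{1\} < H^n$, the unit groupoid ${\cal G}(PB_n(\wt M, H))$ maps by a covering homomorphism into $PB_n({\cal G}_M)$. Lemma \ref{covering} then gives isomorphisms $\pi_k(PB_n({\cal G}_M)) \cong \pi_k(PB_n(\wt M, H))$ for every $k \geq 2$, where the right-hand side is the ordinary $k$-th homotopy group of the manifold $PB_n(\wt M, H)$.

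In parallel, the quotient map $PB_n(\wt M, H) \to PB_n(M)$ is an orbifold covering map by Lemma \ref{covering-lemma}, as it is $|{\cal G}(\cdot)|\to |PB_n({\cal G}_M)|$ for the covering homomorphism above. Since $PB_n(\wt M, H)$ has no singular points, Corollary \ref{orbcovcor} supplies isomorphisms $\pi_k(PB_n(\wt M, H)) \cong \pi_k^{orb}(PB_n(M))$ for $k \geq 2$. Now invoking Theorem \ref{mt}, the orbifold $PB_n(M)$ is aspherical, so by Definition \ref{aspherical} these orbifold homotopy groups vanish. Concatenating, $\pi_k(PB_n({\cal G}_M)) = 0$ for all $k \geq 2$, which is the desired conclusion.

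The only potential obstacle is a subtle one of hypotheses rather than of mathematics: Proposition \ref{induction} is literally stated for $M \in {\cal C}$ (i.e.\ infinite orbifold fundamental group), which excludes the first case of Theorem \ref{mt} where $\pi_1^{orb}(M) = \mathbb{Z}/m$ is finite. However, the argument above does not invoke Proposition \ref{induction} directly; it uses only the more basic Corollary \ref{orbcovcor}, which applies to any good orbifold, and all three orbifolds in Theorem \ref{mt} are manifestly good (they are global quotients of $\mathbb{C}$ or $\mathbb{C}^*$). Alternatively, one could phrase the same argument via Morita invariance, using Lemma \ref{morita} to pass between the two translation orbifold Lie groupoids witnessing the asphericity, but the covering-homomorphism route above avoids even this appeal.
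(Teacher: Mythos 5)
Your proof is correct and follows essentially the same route the paper intends: the corollary is stated immediately after Proposition \ref{induction} and is meant to follow from Theorem \ref{mt} by transferring asphericity along the covering homomorphism ${\cal G}(PB_n(\wt M,H))\to PB_n({\cal G}_M)={\cal G}(PB_n(\wt M,H),H^n)$ and the orbifold covering $PB_n(\wt M,H)\to PB_n(M)$, exactly as you do via Lemma \ref{covering} and Corollary \ref{orbcovcor}. Your observation that Proposition \ref{induction} is stated only for $M\in{\cal C}$, which excludes the finite-$\pi_1^{orb}$ case of Theorem \ref{mt}, is a legitimate point, and your workaround of invoking Corollary \ref{orbcovcor} directly (which needs only that the three orbifolds are good, as global quotients of ${\Bbb C}$ or ${\Bbb C}^*$) closes that small gap cleanly.
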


The above corollary is
equivalent to Theorem \ref{mt}, but stated in the
category of Lie groupoids. The advantage of this statement is that we
can now state the Asphericity conjecture in a wider context.

Recall that, we defined a Lie groupoid $\cal G$ to be 
  Hausdorff if $|{\cal G}|$ is Hausdorff.

  \medskip
  \noindent
  {\bf Asphericity Problem.} {\it Consider an aspherical Hausdorff Lie groupoid
  $\cal G$, such that ${\cal G}_0$ is connected and $2$-dimensional,
  then $PB_n({\cal G})$ is aspherical.}

\medskip

Next, consider the homomorphism
$f({\cal G}_M):PB_n({\cal G}_M)\to PB_{n-1}({\cal G}_M)$,  for $M\in {\cal C}$.

We end this section by giving a functorial relationship between the homomorphisms
$f({\cal G}_M)_*$ and $f({\cal H}_M)_*$, for
two translation orbifold Lie groupoids ${\cal G}_M$ and ${\cal H}_M$, respectively.

\begin{thm}\label{fmt} 
  For $M\in {\cal C}$ and for any two 
translation orbifold Lie groupoids ${\cal G}_M$ and ${\cal H}_M$, we have the following
commutative diagram for all $q$, where the
  horizontals maps are isomorphisms.
  
\centerline{
  \xymatrix{\pi_q(PB_n({\cal G}_M))\ar[d]^{{f({\cal G}_M)}_*}\ar[r]&
    \pi_q(PB_n({\cal H}_M))\ar[d]^{{f({\cal H}_M)}_*}\\
\pi_q(PB_{n-1}({\cal G}_M))\ar[r]&\pi_q(PB_{n-1}({\cal H}_M)).}}
\end{thm}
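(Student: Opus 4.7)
The plan is to reduce the theorem to Lemma \ref{morita} by showing that the configuration Lie groupoid construction $PB_n(-)$ preserves equivalences of translation orbifold Lie groupoids, and then invoking the fact that an equivalence of Lie groupoids induces a weak homotopy equivalence on classifying spaces.

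First, I would apply Lemma \ref{morita} to produce a translation orbifold Lie groupoid $\mathcal{K}_M = \mathcal{G}(M_3, H_3)$ together with equivalences $e_1 \colon \mathcal{K}_M \to \mathcal{G}_M$ and $e_2 \colon \mathcal{K}_M \to \mathcal{H}_M$. Recall from the proof of Lemma \ref{morita} that $e_i$ has the explicit form $(f_0)_i = p_i$ (a covering map $M_3 \to M_i$) and $(f_1)_i = (p_i, \rho_i)$ where $\rho_i \colon H_3 \to H_i$ is the quotient homomorphism coming from inclusion of orbifold fundamental groups.

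Next, using the identification $PB_n(\mathcal{G}(\wt M, H)) = \mathcal{G}(PB_n(\wt M, H), H^n)$ recorded in Section \ref{qfa}, I would show that each $e_i$ induces a homomorphism
\[
PB_n(e_i)\colon \mathcal{G}(PB_n(M_3, H_3), H_3^n) \to \mathcal{G}(PB_n(M_i, H_i), H_i^n)
\]
with object-level map the restriction of $p_i^n$ (which lands in $PB_n(M_i,H_i)$ because $p_i$ sends $H_3$-orbits to $H_i$-orbits) and morphism-level map $(p_i^n, \rho_i^n)$. To see that $PB_n(e_i)$ is again an equivalence of Lie groupoids, I would verify the two conditions of Definition \ref{moritadef}. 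The surjective-submersion condition follows because $p_i^n$ is a covering map onto the open submanifold $PB_n(M_i,H_i) \subset M_i^n$ and submersions and surjections are preserved by products and by restriction to open saturated subsets. The fibered-product condition for $PB_n(e_i)$ is the restriction, to the open set where all orbits are distinct, of the $n$-fold product of the corresponding fibered-product squares for $e_i$, hence remains a fibered product of manifolds. The analogous statement holds for $PB_{n-1}(e_i)$.

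Since an equivalence of Lie groupoids induces a weak homotopy equivalence on classifying spaces (noted before Definition \ref{moritadef}), applying $\pi_q$ yields that $PB_n(e_1)_*$, $PB_n(e_2)_*$, $PB_{n-1}(e_1)_*$, and $PB_{n-1}(e_2)_*$ are isomorphisms. Define the horizontal maps in the statement as $PB_n(e_2)_* \circ PB_n(e_1)_*^{-1}$ and $PB_{n-1}(e_2)_* \circ PB_{n-1}(e_1)_*^{-1}$; these are isomorphisms. Commutativity of the square reduces to the strict commutativity of the two squares
\[
\xymatrix{PB_n(\mathcal{K}_M)\ar[r]^{PB_n(e_i)}\ar[d]^{f(\mathcal{K}_M)}&PB_n(\text{target})\ar[d]^{f(\text{target})}\\ PB_{n-1}(\mathcal{K}_M)\ar[r]^{PB_{n-1}(e_i)}&PB_{n-1}(\text{target}),}
\]
which is immediate because $f(-)$ is the restriction of the coordinate projection $(x_1,\dots,x_n)\mapsto(x_1,\dots,x_{n-1})$ at both object and morphism levels, and this projection commutes with the product maps $p_i^n$ and $\rho_i^n$. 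The main obstacle is the verification in the second paragraph that $PB_n(-)$ carries equivalences to equivalences; the subtlety is to confirm that the submersion and fibered-product conditions survive the restriction from $M_i^n$ to its open subset $PB_n(M_i,H_i)$, which goes through because this subset is saturated under both the $H_i^n$-action and the covering $p_i^n$.
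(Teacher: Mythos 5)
Your proposal is correct and follows essentially the same route as the paper: invoke Lemma \ref{morita} to obtain a third translation orbifold Lie groupoid ${\cal K}_M$ with equivalences to both ${\cal G}_M$ and ${\cal H}_M$, check that $PB_n(-)$ carries these to equivalences, and conclude via the weak homotopy equivalence of classifying spaces together with compatibility with the coordinate projections. The only difference is that you spell out the verification of the two conditions of Definition \ref{moritadef} for $PB_n(e_i)$, which the paper dispatches with ``it is easy to see''; your added detail is sound.
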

    
    \begin{proof}
The statement is to relate the identification of homotopy groups
of the different translation orbifold Lie groupoids
inducing the orbifold structure on $PB_n(M)$, and the corresponding homomorphism
$f({\cal G}_M)_*$, via a commutative diagram.

Since the two translation orbifold Lie groupoids ${\cal G}_M$ and ${\cal H}_M$ 
induce the same orbifold structure on $M$, by Lemma
\ref{morita}
there is another translation orbifold Lie groupoid ${\cal K}_M$ and
a diagram of equivalences.

\centerline{
  \xymatrix{{\cal G}_M&\ar[l]{\cal K}_M\ar[r]&{\cal H}_M.}}

\noindent
Since all the orbifold Lie groupoids we are considering are of
translation type, it is easy to see that the above diagram induces
the following diagram of equivalences. For, the morphism space of
$PB_n({\cal G}(\wt M, H))$ is nothing but $PB_n({\cal G}(\wt M,H))_0\times
H^n=PB_n(\wt M,H)\times H^n$, for any translation orbifold Lie groupoid ${\cal G}(\wt M,H)$. 
See Definition \ref{moritadef}.

\centerline{
  \xymatrix{PB_n({\cal G}_M)&\ar[l]PB_n({\cal K}_M)\ar[r]&PB_n({\cal H}_M).}}

\noindent
Hence, we get the following commutative diagram
of homomorphisms.

\centerline{
  \xymatrix{PB_n({\cal G}_M)\ar[d]^{f({\cal G}_M)}&
    \ar[l]PB_n({\cal K}_M)\ar[r]\ar[d]^{f({\cal K}_M)}&
    PB_n({\cal H}_M)\ar[d]^{f({\cal H}_M)}\\
PB_{n-1}({\cal G}_M)&\ar[l]PB_{n-1}({\cal K}_M)\ar[r]&PB_{n-1(}{\cal H}_M).}}

\noindent
In the diagram all the horizontal homomorphisms are equivalences, and hence
induce weak homotopy equivalences on the classifying spaces.

Now, note that the homomorphisms $f({\cal G}_M)$, 
$f({\cal K}_M)$ and $f({\cal H}_M)$ all induce the same projection
map $PB_n(M)\to PB_{n-1}(M)$.

The theorem now follows, by applying the homotopy functor on the
above diagram.\end{proof}

\section{Proof}
In this section we prove Theorem \ref{mt}, which supports the Asphericity conjecture.

\begin{proof}[Proof of Theorem \ref{mt}] {\bf Case 1.}
Let $M$ be the complex plane
with a cone point of
order $m\geq 2$ at the origin. That is, the underlying space of
$M$ is $\Bbb C$ and the cyclic group (say $H$) of order $m$ is acting on $\Bbb C$ by
rotation about the origin, by an angle $\frac{2\pi}{m}$,
to get the orbifold $M$ as the quotient.

First note that, in this case the
universal orbifold cover of $M$ is $\wt M={\Bbb C}$. Then, note that $PB_n(\wt M, H)$ is
  the following hyperplane arrangement complement.

  $$PB_n(\wt M, H)=\{(z_1,z_2,\ldots, z_n)\in {\Bbb C}^n\ |\ z_i^m\neq z_j^m,\ i\neq j\}.$$ 
\noindent  
  In [\cite{TN83}, p5] it was shown that $PB_n(\wt M, H)$
  fibers over the manifold $PB_{n-1}({\Bbb C}^*)$. For $m=2$ it was established in \cite{Bri73}.
  The fibration $B:PB_n(\wt M, H)\to PB_{n-1}({\Bbb C}^*)$ is defined
  by $z_j\mapsto z_n^m-z_j^m$, for $j=1,2,\dots, n-1$. By Theorem \ref{FN},
  $PB_{n-1}({\Bbb C}^*)$ is aspherical. Therefore, again using the long exact sequence of homotopy groups,
  and by an induction on $n$, we get that $PB_n(\wt M, H)$ is aspherical.
  Hence, by Corollary \ref{orbcovcor} $PB_n(M)$ is aspherical.

  \medskip
  \noindent
  {\bf Case 2.} Assume $M$ has $\Bbb C$ as the underlying space, with two
  cone points at $0$ and $\frac{1}{2}$ of order $2$ each.
  
  Consider the hyperplane arrangement
  complement corresponding to the affine Artin group of type $\wt D_n$ (\cite{Bri73}).
$$P\wt D_n({\Bbb C}):=\{(z_1,z_2,\ldots, z_n)\in {\Bbb C}^n\ |\ z_i\pm z_j\notin {\Bbb Z},\ i\neq j\}.$$ 
  We proceed to show that there is an 
  orbifold covering map $P\wt D_n({\Bbb C})\to PB_n(M)$. We define this map in the
  following diagram.
  
  \centerline{
    \xymatrix{P\wt D_n({\Bbb C})\ar[r]^E\ar[dr]&X\ar[d]^Q\\
      &PB_n(M)}}
\noindent
  Here $X=\{(w_1,w_2,\ldots, w_n)\in ({\Bbb C}^*)^n\ |\ z_i\neq z_j^{\pm 1},\ i\neq j\}$.
  The map $E$ is the restriction of the $n$-fold product of the exponential
  map $z\mapsto \exp(2\pi iz)$, and $Q$ is 
  the restriction of the $n$-fold product of the map $q:{\Bbb C}^*\to M$, defined by
  $$q(w)=\frac{1}{4}\left(1-\frac{1+w^2}{2w}\right).$$
  $E$ is a genuine covering map and $Q$ is a $2^n$-sheeted orbifold covering map. Since $q$
  is a $2$-fold orbifold covering map as $q$ sends the branch point $+1$ to $0$ and 
  $-1$ to $\frac{1}{2}$, and it is of degree $2$ around these points. Therefore,
  $Q\circ E:P\wt D_n({\Bbb C})\to PB_n(M)$ is an
  orbifold covering map.
  
  On the other hand recently, in \cite{PS20}, it was proved that
  $P\wt D_n({\Bbb C})$ is aspherical. Therefore, by Corollary \ref{orbcovcor} 
  $PB_n(M)$ is also aspherical.

  \medskip
  \noindent
  {\bf Case 3.} Assume $M$ has ${\Bbb C}-\{1\}$  as the underlying
  space with $0$ a cone point of order $2$.

  Consider the following hyperplane arrangement complement.

$$W=\{w\in{\Bbb C}^n\ |\ w_i\neq \pm w_j,\ \text{for all}\ i\neq j; w_k\neq \pm 1,\ \text{for all}\ k\}.$$

In [\cite{CMS10}, \S 3] the following homeomorphism is observed.

$${\Bbb C}^*\times W\simeq X:=\{x\in{\Bbb C}^{n+1}\ |\ x_i\neq \pm
x_j,\ \text{for all}\ i\neq j; x_1\neq 0\}.$$
$$(\lambda, w_1,w_2,\ldots, w_n)\mapsto (\lambda, \lambda w_1,\ldots, \lambda w_n)$$

In [\cite{CMS10}, Lemma 3.1] it is then proved that the 
hyperplane arrangement complement $X$ is simplicial, 
in the sense of \cite{Del}. Hence, again by \cite{Del} $X$ is aspherical.
Therefore, $W$ is aspherical.

Next, note that there is the following finite sheeted orbifold covering map.
$$W\to PB_n(M).$$
$$(w_1,w_2,\dots,w_n)\mapsto (w_1^2,w_2^2,\dots,w_n^2)$$

Thus, $PB_n(M)$ is also aspherical.

This completes the proof of the theorem.
    \end{proof}

    \begin{rem}{\rm For a direct proof of asphericity of $X$, see \cite{LR24}.}\end{rem}
    
  \newpage
\bibliographystyle{plain}
\ifx\undefined\bysame
\newcommand{\bysame}{\leavevmode\hbox to3em{\hrulefill},}
\fi

\end{document}